\newtheorem{theorem}{Theorem}
\newtheorem{lemma}[theorem]{Lemma}
\newtheorem{proposition}[theorem]{Proposition} 
\newtheorem{rem}[theorem]{Remark}
\newtheorem{assumption}{Assumption}
\newtheorem{corollary}[theorem]{Corollary}
\theoremstyle{definition}
\newtheorem{definition}[theorem]{Definition}
\DeclareMathOperator{\dist}{dist}
\DeclareMathOperator{\diam}{diam}
\def \MidEdgesExcept {\mathcal E^{\mathrm{mid}}_{\circ}}
\def \HalfEdges {\mathcal E^{\mathrm{half}}}
\def \Faces {\mathcal F}
\def \Blue{\mathbf{blue}}
\def \Yellow{\mathbf{yellow}}
\def \Pperc {\mathbb P^{\mathrm{perc}}}
\def \Ploop {\mathbb P^{\mathrm{loop}}}
\def \Eloop {\mathbb E^{\mathrm{loop}}}
\def\T{{\mathbb T}}
\def\LIM#1{#1^{\bullet}}
\def\IP{\mathcal{IP}} 
\def\CP#1#2{[{ #1 \leftrightsquigarrow #2}]} 
\def\CPP#1#2#3#4{[{ #1 \leftrightsquigarrow #2, #3 \leftrightsquigarrow #4}]} 
\title{Percolation and $O(1)$ loop model}
\author{Mikhail Khristoforov}
\address{St. Petersburg State University, St. Petersburg, Russia}
\email{Mikhail.Khristoforov@gmail.com}
\author{Stanislav Smirnov}
\address{Université de Genève, Genève 4, Switzerland and St. Petersburg State University, St. Petersburg, Russia}
\email{Stanislav.Smirnov@unige.ch}
\begin{document}

\begin{abstract}
We present an ``ultimate'' proof of Cardy's formula for the critical percolation on the hexagonal lattice
\cite{Smirnov01criticalpercolation}, showing the existence of the universal and conformally invariant scaling limit of crossing
probabilities.
The new approach is more conceptual, less technically demanding, and is amenable to generalizations.
\end{abstract}

\maketitle

\section{Introduction} 

Percolation was introduced by Broadbent and Hammersley \cite{BroadbentHammersley57} to model how a fluid spreads through a
random medium.
It is very easy to define: sites (or bonds) of a graph are declared open or closed independently (in Bernoulli percolation)
with probabilities $p$ and $1-p$ correspondingly, and connected open clusters are studied.
Nevertheless, this percolation model exhibits a very rich and complicated behavior even on planar lattices, including a phase
transition at some lattice-dependent value $p_c$.

In particular, the ``crossing probability'' (of the existence of an open cluster connecting two opposite sides of a fixed
shape), as the mesh of the lattice tends to 0, tends to 0 when $p<p_c$ and tends to 1 when $p>p_c$ --- a ``sharp threshold
phenomenon''.

Meanwhile, for regular lattices, the Russo-Seymour-Welsh a priori estimates guarantee that for $p=p_c$ the ``crossing
probability'' stays bounded away from 0 and 1, strongly suggesting the existence of a non-trivial ``scaling limit''.

In  the seminal work \cite{LanglandsEtAl94} Langlands, Pouliot, and Saint-Aubin conducted a number of computer experiments
suggesting that there is a universal (lattice-independent) scaling limit of the crossing probabilities at criticality which is
furthermore conformally invariant, i.e. depends only on the conformal modulus of the quadrangular shape. 

Almost immediately  Cardy \cite{Cardy92} derived (unrigorously) the exact formula for the limit as a hypergeometric function of
the modulus, which Carleson observed to take a particularly nice form for an equilateral triangle with one more marked point on
a side.

In 2000 the second author provided a rigorous proof of the Cardy's prediction for the critical percolation on the triangular
lattice, which allowed to deduce many of its properties.

This proof has never appeared in a journal form not in the least because we felt it somehow artificial and having unexplained
complications, albeit still elegant. The result was widely used to deduce various properties of percolation, such as the
convergence of interfaces to $SLE_6$ and exact values of the critical exponents. It also stimulated an extremely fruitful
approach to study models by tools of discrete holomorphic or harmonic observables \cite{LawlerSchrammWerner04, CD-CHKS,
Hugo13Parafermionic}. 

It took some time to arrive at what we think is ``the proof from the Book'', which we present in this article.
On one hand, the new proof is more ``ideologically  fruitful'', while it can be literally translated into the old one; the
objects under consideration are classical disorder operators, rather than some curiosities of uncertain origins. The
parafermionic nature of the observable and its relation to similar objects in the Ising and other models becomes  clear, cf.
\cite{Smirnov10, ChelkakSmirnov12, HonglerSmirnov13}.
On the other hand, the proof is much more straightforward. In particular, discrete holomorphicity becomes exact and 
there is no need to estimate errorterms.

Moreover, the new description of the observable admits immediate generalizations allowing one to obtain several results (e.g.
Schramm's formula \cite{Schramm01PercolationFormula} or formulae for the probabilities of the link patterns in the
topological hexagon \cite{FloresSimmonsKleban15}) in the spirit of this article. 
We intend to show that in the subsequent papers \cite{tmpKSS, tmpKK}.

Justification of Cardy's formula for graphs other than the hexagonal lattice remains an open problem and we have some hope that
the new point of view  could become useful there.

{\it Acknowledgments:}
The authors are  grateful to Hugo Duminil-Copin, Dmitry Krachun, Ioan Manolescu and Mikhail Skopenkov for fruitful discussions.
The work is supported by the Swiss NSF and ERC Advanced Grants 340340 and  741487.
The section \ref{Section:Spinor} was written entirely under support of Russian Science Foundation grant 19-71-30002.

\subsection{Percolation model}
We will study critical site percolation on triangular lattice, or equivalently plaquette percolation on hexagonal lattice.
Let $\mathbb C_{\hexagon}^{δ}$ be a hexagonal lattice of mesh size $δ$ on $\mathbb C$. A $\hexagon^δ$-domain (hexagonal domain)
is a bounded simply-connected domain glued from the faces of $\mathbb C_{\hexagon}^{δ}$ and a $\hexagon$-domain is a domain
that is $\hexagon^δ$-domain for some $δ$. 
By  $\Faces(Ω)$ and $\HalfEdges (Ω)$ we denote  the sets of faces and  half-edges of a $\hexagon$-domain $Ω$ respectively.

The percolation model on $Ω$ is the uniform measure on the set of all $2^{\#\Faces(Ω)}$ colorings of faces of $Ω$ in two
colors, say blue and yellow, we denote this measure by $\Pperc_{Ω}$. 
For a given coloring 
$$σ \colon \Faces(Ω) \to \{\Yellow,\Blue\}$$ 
if there is a $σ$-blue path between two sets $X$ and $Y$, we say that $X$ and $Y$  are {\it connected} and write $X
\leftrightarrow Y$.

The scaling limits of probabilities to be connected in the percolation model are proven to exist and be conformally invariant.
In this article we give a revised proof of the fundamental result in the area.

\begin{theorem}[Smirnov'01, \cite{Smirnov01criticalpercolation}]
\label{Thm1}
  If  $\{(Ω^δ, A^δ, B^δ, C^δ, D^δ) \}_δ$ approaches $(\LIM  {Ω}, \LIM A, \LIM B, \LIM C, \LIM D)$ (in the sense of 
  Definition \ref{definition:NiceApproximation}) then
  \begin{equation}
  \label{eq:thm}
    \lim\limits_{δ\searrow 0}\Pperc_{Ω^δ}
    [∂_{A^δ B^δ} Ω^δ ↔  ∂_{C^δ D^δ} Ω^δ] 
    = \frac {φ(\LIM C)- φ(\LIM D)} {φ(\LIM C) - φ(\LIM A)},
  \end{equation}
where $φ$ is the conformal map from $\LIM  {Ω}$ to an equilateral triangle, mapping $\LIM A, \LIM B, \LIM C$ to vertices.  
\end{theorem}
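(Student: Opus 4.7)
The plan is to recast critical site percolation on the triangular lattice as the $O(1)$ \emph{loop model} on $\mathbb{C}_\hexagon^\delta$. Indeed, for each two-coloring $\sigma \in \{\Blue,\Yellow\}^{\Faces(\Omega)}$ the edges of $\mathbb{C}_\hexagon^\delta$ that separate cells of distinct colors form a collection of non-crossing loops, and this correspondence identifies $\Pperc_\Omega$ (up to a harmless global color swap) with the uniform measure on such loop configurations. Connection events for percolation thereby translate into topological statements about loops and open arcs in the loop ensemble.

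Next I would introduce the parafermionic disorder observable. Fix a marked boundary point $A$ and, for each half-edge $z \in \HalfEdges(\Omega)$, work in the loop ensemble modified so that a single open interface $\gamma$ enters $\Omega$ at $A$ and terminates at $z$, with the remaining edges arranged into loops. Define
\begin{equation*}
F_A(z) \;=\; \Eloop_\Omega\!\left[\, e^{-i\sigma\, W(\gamma;\, A \to z)}\cdot \mathbf{1}_{\{\gamma \text{ ends at } z\}}\,\right],
\end{equation*}
where $W$ is the accumulated turning of $\gamma$ from $A$ to $z$ and $\sigma$ is the appropriate spin. The heart of the new approach is the claim that $F_A$ satisfies \emph{exact} discrete Cauchy--Riemann equations on $\mathbb{C}_\hexagon^\delta$: at any interior vertex the three contributions from the three incident half-edges cancel identically, by the algebraic identity $1 + \tau + \tau^2 = 0$ with $\tau = e^{2\pi i/3}$ applied to a local involution on loop configurations. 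Because the loop weight is $1$, no configuration counts enter this cancellation, so unlike the 2001 proof no error terms need to be estimated.

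I would then decode $F_A$ in terms of classical observables. Marking three points $A, B, C$ in cyclic order on $\partial \Omega$ and defining $F_B, F_C$ analogously, their linear combinations against cube roots of unity can be identified with the three macroscopic crossing functions $H_X(z) = \Pperc_\Omega\!\left[\partial_{YZ}\Omega \leftrightarrow z\right]$ for $\{X,Y,Z\} = \{A,B,C\}$. On each boundary arc $\partial_{YZ}\Omega$ exactly one of the $H_X$'s is forced by topology to vanish, which pins the imaginary part of $F_A$ to be piecewise constant along $\partial\Omega$, taking three values whose directions are rotated by multiples of $2\pi/3$.

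Finally, I would pass to the scaling limit. Russo--Seymour--Welsh a priori estimates provide Hölder equicontinuity of the family $\{F_A^\delta\}_\delta$, hence precompactness. Any subsequential limit $F_A^\bullet$ is holomorphic on $\LIM{\Omega}$, since exact discrete holomorphicity transfers to the limit verbatim, and together with the prescribed boundary behavior a standard Riemann--Hilbert uniqueness argument identifies $F_A^\bullet$ with the conformal map $\varphi\colon \LIM{\Omega} \to \triangle$ sending $\LIM A, \LIM B, \LIM C$ to the vertices of an equilateral triangle. Uniqueness of the limit upgrades subsequential convergence to full convergence, and evaluating at the fourth marked point $\LIM D$ yields formula \eqref{eq:thm}. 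The main obstacle I anticipate is controlling the observable near the corners $A, B, C$, where $\varphi'$ blows up and RSW must be applied carefully; by design, the algebraic core --- exact discrete holomorphicity --- is frictionless.
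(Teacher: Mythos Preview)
Your outline captures the architecture of the paper's proof correctly: loop-model reformulation, an observable with exact discrete holomorphicity proved by a local three-term cancellation $1+\tau+\tau^2=0$, RSW-based equicontinuity, Arzel\`a--Ascoli and Morera, boundary identification via the argument principle, and evaluation at $D$. But the observable you write down is not the one the paper uses, and this is precisely where the content lies.

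You introduce a winding-based parafermion $F_A(z)=\Eloop[\,e^{-i\sigma W(\gamma;A\to z)}\,]$ attached to a \emph{single} boundary disorder $A$, in the style of the Ising/FK observables, and then assert that linear combinations of $F_A,F_B,F_C$ recover the crossing functions $H_X$. The paper does something different and more direct: it fixes \emph{three} boundary disorders $u_1,u_2,u_3$ together with a moving fourth disorder $z$, and sets $F(z)=\sum_j \tau^j H_j(z)$, where $H_j(z)$ is the $\Ploop$-probability that $z$ is linked to $u_j$ in the four-point link pattern. No winding number appears; the weight $\tau^j$ is purely topological. Exact discrete holomorphicity is then obtained by grouping the configurations in $\bigcup_k W_\Omega(u_1,u_2,u_3,z_k)$ into triples that differ only by the two half-edges at the vertex $v$; each triple contributes a full cycle $\tau^a+\tau^{a+1}+\tau^{a+2}=0$.

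The gap in your proposal is the unjustified bridge between the single-interface winding observable and the four-disorder link-pattern probabilities: you have not shown that such linear combinations actually reproduce the $H_j$, nor that your $F_A$ is exactly discrete holomorphic at the $O(1)$ weight (for a two-disorder configuration the local involution does not obviously produce the clean three-term identity). The paper's point is that by building all three boundary points into the observable from the outset and weighting by link pattern rather than by winding, both the holomorphicity and the boundary behaviour~(\ref{eq:BV}) become immediate, and the crossing probability is read off directly as the position of $F(D^\delta)$ on the side $[\tau^{j-1},\tau^{j+1}]$ of $\T$. Your worry about corner blow-up is also misplaced: in the paper the H\"older estimate (Lemma~\ref{lemma:DiscreteEstimates}) is uniform up to the boundary and no separate corner analysis is needed.
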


\subsection{Loop representation}
For a collection of half-edges $ξ\subset \HalfEdges(Ω)$ we denote by $∂ξ$ the set of vertices and mid-edges of $Ω$ that are
adjacent to an odd number of half-edges of $ξ$. 

Let $U = \{u_1, \dots u_k\}$ be a set of $k$ mid-edges of $Ω$, we call them {\it marked points}.
We define
$$ W_Ω(u_1, \dots u_k) := W_Ω(U) := 
\left \{ ξ \subset  \HalfEdges(Ω) : ∂ ξ = U  \right \}$$
and call elements of $ W_Ω(u_1, \dots u_k)$ {\it loop configurations with disorders at marked points}.
Assume that $k$ is even, then this set in non-empty. 

Let $ξ$ be such a loop configuration. 
The union of half-edges of $ξ$ still will be denoted by $ξ$.
The union of connectivity components of $ξ$ containing at least one marked point we denote by $\IP(ξ)$ and call the {\it Interface Part} of $ξ$.
Note that $ξ\setminus \IP(ξ)$ is a union of disjoint loops and $\IP(ξ)$ is a union of disjoint paths, matching marked points.
This matching is called {\it Link Pattern} of $ξ$.

By $\Ploop_{Ω, U}$ we denote the uniform measure on $W_Ω(U)$.
Note that $\Ploop_{Ω, \emptyset}$ corresponds to the loop $O(1)$ model (or, equivalently,  the Ising model at the infinite temperature).
The matter of our interest is the law of the link pattern of the uniformly random loop configuration with disorders at marked points. 
Note that if $ξ_1$ and $ξ_2$ are loop configurations with the same disorders, then the symmetric difference $ξ_1 \oplus ξ_2$ is a union of loops. 
This implies that there are exactly $2^{\# \Faces (Ω)}$ loop configurations with given disorders.

If $z$ and $w$ are two points on $∂ Ω$ we denote by $∂_{zw}Ω$ the counterclockwise arc of $∂ Ω$ from $z$ to $w$. 
When $u_1, \dots, u_m$ are defined as points lying on the boundary of $Ω$ we always mean that they go in the counterclockwise
order and are indexed cyclically: $u_{n\pm m} := u_n$. 
For $j, j' \in \mathbb Z$  we use shorthands $∂_{jz} Ω := ∂_{u_jz} Ω$, $∂_{zj} Ω := ∂_{zu_j} Ω$, $∂_{jj'} Ω := ∂_{u_ju_{j'}} Ω$.
Additionally, if $m = 2l + 1$ is odd then $∂_j Ω := ∂_{u_{j+l}u_{j-l}} Ω$.

\begin{lemma}
\label{lemma:percloop}
 Let $u_1, \dots, u_4$ be four distinct mid-edges on $∂ Ω$. 
 Then there are two possible link patterns of a loop configuration $ξ$: 
 either $u_1$ is linked to $u_2$ and $u_3$ to $u_4$ in $\IP(ξ)$ or $u_1$ is linked to $u_4$ and $u_2$ to $u_3$. 
 We denote the corresponding events by $\CPP{u_1}{u_2}{u_3}{u_4}$ and $\CPP{u_1}{u_4}{u_2}{u_3}$. Then
$$\Pperc_Ω[∂_{u_1u_2} Ω ↔  ∂_{u_3u_4} Ω ] = 
 \Ploop_{Ω, \{u_1, u_2, u_3, u_4\}} \CPP{u_1}{u_4}{u_2}{u_3}.$$
\end{lemma}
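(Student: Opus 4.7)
The plan is to exhibit a bijection $\sigma \mapsto \xi_\sigma$ between $\{\Blue,\Yellow\}^{\Faces(\Omega)}$ and $W_\Omega(U)$ that intertwines the two events of interest. Since both sides carry the uniform probability measure and, as noted just before the lemma, both have cardinality $2^{\#\Faces(\Omega)}$, such a bijection immediately yields the desired equality.

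To build it, fix the alternating boundary data that paints the arcs $\partial_{12}\Omega, \partial_{34}\Omega$ blue and $\partial_{23}\Omega, \partial_{41}\Omega$ yellow, viewed as colors of the external regions abutting $\partial\Omega$; at each mid-edge $u_i$ the two arcs meeting there disagree, and we think of the boundary edge containing $u_i$ as split by $u_i$ into two half-edges that inherit the arc color of their vertex endpoint. Define $\xi_\sigma\subset \HalfEdges(\Omega)$ to consist of those $h$ for which the two face/arc colors flanking $h$ disagree. Because three faces meet at every interior vertex of the hexagonal lattice and any 2-coloring of three regions produces an even number of disagreeing neighbor-pairs (namely 0 or 2), the count of $\xi_\sigma$-half-edges at every interior vertex is even; the analogous parity check applied to boundary vertices and to non-marked mid-edges gives even counts there as well, while at each $u_i$ the forced disagreement between the two arc colors yields exactly one half-edge. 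Hence $\xi_\sigma\in W_\Omega(U)$. The map is invertible: given $\xi\in W_\Omega(U)$, one propagates colors inward from the boundary arcs through face-adjacencies, switching color exactly across those edges both of whose half-edges lie in $\xi$; the simple-connectedness of $\Omega$ together with the vertex-parity condition on $\xi$ ensures that different propagation paths agree, producing a unique preimage.

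Finally, we match events. The two interface paths of $\IP(\xi_\sigma)$ separate blue regions from yellow ones, and self-duality of critical site percolation on the triangular lattice (dual to hexagonal-face percolation) guarantees that exactly one of the two crossings --- a blue one from $\partial_{12}\Omega$ to $\partial_{34}\Omega$ or a yellow one from $\partial_{23}\Omega$ to $\partial_{41}\Omega$ --- occurs. In the blue-crossing case, the interface emerging from $u_1$ runs along the left edge of the blue cluster, keeping the yellow of arc $\partial_{41}\Omega$ on its left and the blue of arc $\partial_{12}\Omega$ on its right, and by the coloring constraints can only terminate at the marked mid-edge whose flanking arcs match these colors, namely $u_4$; the remaining interface then necessarily links $u_2$ to $u_3$. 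This is precisely $\CPP{u_1}{u_4}{u_2}{u_3}$, and conversely an interface linking $u_1$ to $u_4$ forces the blue crossing.

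The principal obstacle is the careful boundary bookkeeping used in the parity argument, in particular the formal splitting of the boundary edge containing $u_i$ into two half-edges carrying distinct exterior arc colors, so that the parities work out at $u_i$ itself, at its two endpoint vertices, and at all neighboring non-marked mid-edges simultaneously. Once this local formalism is set up, the combinatorial bijection of Step 2 and the topological identification of events in Step 3 go through almost mechanically.
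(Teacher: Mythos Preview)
Your proof is correct and follows essentially the same approach as the paper: construct the bijection $\sigma\mapsto\xi(\sigma)$ via the domain-wall rule with the alternating blue/yellow exterior boundary conditions on the four arcs, and then identify the crossing event with the link pattern $\CPP{u_1}{u_4}{u_2}{u_3}$. You have simply spelled out in more detail the parity checks and the interface-following argument that the paper leaves implicit (and to a figure).
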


\begin{proof}
For a coloring $σ$ one can construct a loop configuration $ξ = ξ(σ)$ with disorders at $u_1, \dots u_4$ by the following rule:
a half-edge $e$ belongs to $ξ(σ)$ if and only if the colors on the left and on the right of $e$ differ, (see Figure
\ref{fig:tennis}).
Here we assume that the outer boundary is blue along $∂_{12}Ω$ and  $∂_{34}Ω$ and is yellow  along $∂_{23}Ω$ and  $∂_{41}Ω$.
This map is a bijection between colorings and loop configurations with disorders at $u_1, \dots u_4$, moreover $∂_{u_1u_2} Ω ↔
∂_{u_3u_4} Ω$ in $σ$ if and only if $ \CPP{u_1}{u_4}{u_2}{u_3}$ in $ξ(σ)$.
\end{proof}

\begin{figure}[ht!]
   \centering
    \includegraphics[width=0.4\textwidth]{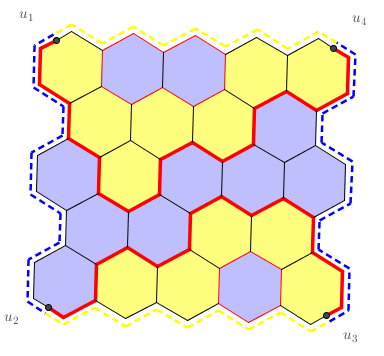}%
    \caption{Here $σ$ is drawn in blue and yellow and $ξ$ in red; $\IP(ξ)$ is thick and outer boundaries are dashed.}
    \label{fig:tennis}
\end{figure}

\subsection{Spinor percolation model}
\label{Section:Spinor}
Lemma \ref{lemma:percloop} shows the correspondence between loop configurations with disorders on the boundary and colorings in
two colors. One can naturally generalize this correspondence for the case when the disorders are allowed to lie inside the
domain. Indeed, let $u_1, \dots u_k$ be mid-edges of $Ω$, and let $ρ\colon \tilde{Ω}_{u_1, \dots u_k} \to Ω$ be the double
covering of $Ω$ ramified at  each $u_j$, so $\tilde{Ω}_{u_1, \dots u_k}$ includes two copies of each face of ${Ω}$. 
A {\it spinor coloring} is a map 
$$σ \colon \Faces(\tilde{Ω}_{u_1, \dots u_k}) \to \{\Yellow, \Blue\}$$
such that two $ρ$-preimages of any face of ${Ω}$ have different colors. 
Note that if each $u_j$ lies on the boundary then
$\tilde{Ω}_{u_1, \dots u_k}$ 
has the same structure of faces and mid-edges as the disjoint union of two copies of ${Ω}$.  
If $σ$ is a spinor coloring and $\tilde  {ξ}(σ)$ is the set of half-edges such that $σ$-colors on the left and the right of it
differ, then $ξ = ρ(\tilde {ξ} (σ))$ is a loop configuration with disorders at  $u_1, \dots u_k$, the vice-versa is also true.

The spinor percolation model is the uniform measure on the set of all spinor colorings. There are several immediate advantages
of working with it. In particular, the interfaces can be sampled by the standard revealment process (and those processes can be
naturally coupled for models on the same domain with different disorders until the moment when the interface `disconnects
disorders').

\section{Discrete holomorphicity} 
 Let $u_1, u_2, u_3$ be three distinct mid-edges lying in the counterclockwise order on $∂ Ω$ and $z$ be any mid-edge
 distinct from them. There are three possible link patterns for a loop configuration with disorders at $u_1, u_2, u_3,
 z$. If $z$ is connected to $u_j$ and $u_{j-1}$ is connected to $u_{j+1}$ by the edges of $\IP(ξ)$ we say that the event $\CP
 {z}{u_j} = \CPP {z}{u_j}{u_{j-1}}{u_{j+1}}$ occurs.

\begin{figure}[ht!]
    \centering
    \includegraphics[width=0.2\textwidth]{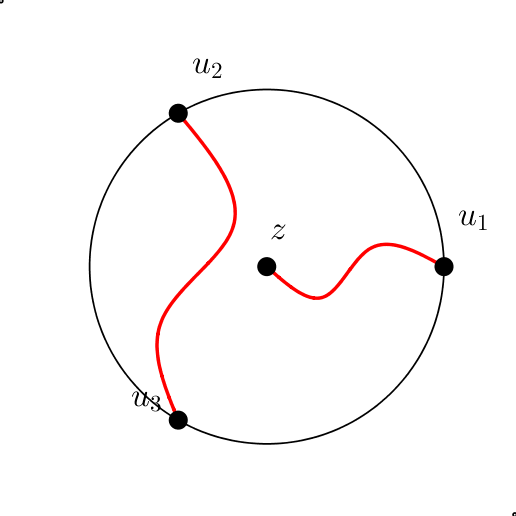}%
    \includegraphics[width=0.2\textwidth]{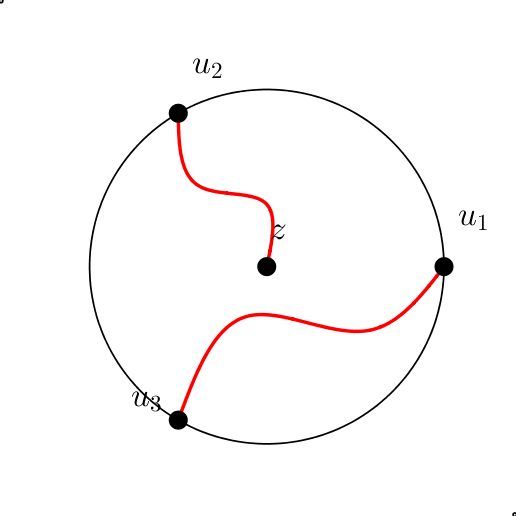}%
    \includegraphics[width=0.2\textwidth]{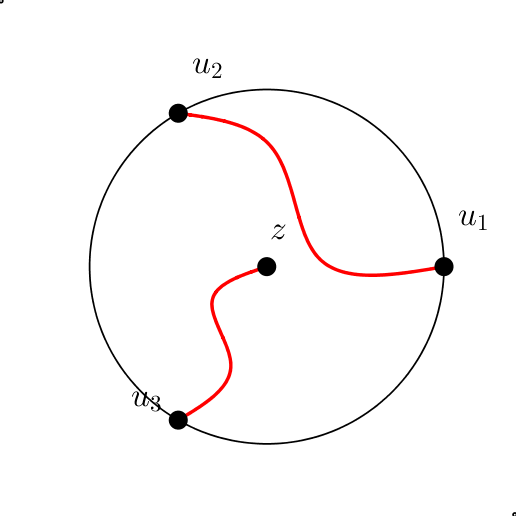}
    \caption{Link patterns $\CP{z}{u_1}$, $\CP{z}{u_2}$, $\CP{z}{u_3}$. 
}
    \label{fig:3+1}
\end{figure}

\begin{definition}
 We set $τ := \exp(2\pi i /3)$.  
 Let $u_1, u_2, u_3$ be three distinct mid-edges lying in the counterclockwise order on $∂ Ω$.
 By $\MidEdgesExcept (Ω)$ we denote the set of all mid-edges of $Ω$ {\it except for} $u_1, u_2, u_3$.
 Then the {\it observable} is a function $F  = F_{Ω, u_1, u_2, u_3}  \colon \MidEdgesExcept(Ω) \to \mathbb C$ given by the
formula
\begin{equation}
\label{eq:defF}
F(z) :=   \Eloop_{Ω, u_1, u_2, u_3, z} [H(ξ)]= \sum\limits_{j=1}^3  τ^j H_j(z),
\end{equation}
where 
$H(ξ) = \sum_{j=1}^3 τ^j \mathbf 1_{\CP {z}{u_j} }$ 
and 
$H_j(z) = \Ploop_{Ω, u_1, u_2, u_3, z} \CP {z}{u_j}  $.
\end{definition}

\begin{lemma}[Discrete holomorphicity]
\label{lemma:DiscreteHolomorphicity}
Let  $z_1, z_2, z_3 \in  \MidEdgesExcept(Ω)$ be three mid-edges around a vertex $v$ indexed in the counterclockwise order, then
\begin{equation} 
\sum_{k=1}^3 τ^k F(z_k) =0.
\label{eq:DiscreteHolomorphicity}
\end{equation}\end{lemma}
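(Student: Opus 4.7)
The plan is to regroup $\sum_{k=1}^3 \tau^k F(z_k)$ by the restriction of a loop configuration away from $v$ and show that each group vanishes by local interface tracing. Denote by $e_k$ the half-edge joining $v$ and $z_k$, and by $e_k'$ the other half-edge of $z_k$. For $\xi \in W_\Omega(u_1,u_2,u_3,z_k)$, set $\xi^\circ := \xi \setminus \{e_1, e_2, e_3\}$ and $\epsilon_j := \mathbf{1}_{e_j' \in \xi}$. The parity conditions at each $z_j$ and at $v$ force each $e_j$-entry into $\xi$ to be determined by $(\xi^\circ, k)$, and an extension exists if and only if $\epsilon_1+\epsilon_2+\epsilon_3$ is odd---in which case it exists for \emph{all three} values of $k$. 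Calling such $\xi^\circ$ \emph{admissible} and denoting its extensions by $\xi^{(1)}, \xi^{(2)}, \xi^{(3)}$, the claim reduces to
\[
S(\xi^\circ) := \sum_{k=1}^3 \tau^{k + j(k,\xi^\circ)} = 0 \quad \text{for every admissible } \xi^\circ,
\]
where $j(k,\xi^\circ) \in \{1,2,3\}$ is the unique index with $\CP{z_k}{u_{j(k,\xi^\circ)}}$ holding in $\xi^{(k)}$.

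I would split on $n := \epsilon_1+\epsilon_2+\epsilon_3 \in \{1,3\}$. If $n=1$, say $\epsilon_{j_0}=1$, then $\xi^\circ$ consists of two paths, one joining $z_{j_0}$ to some $u_a$ and the other joining the remaining two $u$'s. Tracing the interface component of $z_k$ in $\xi^{(k)}$ through the added half-edges near $v$ shows that it always leaves $v$ via $e_{j_0}'$ and terminates at $u_a$; hence $j(k,\xi^\circ)=a$ for all $k$ and $S(\xi^\circ) = \tau^a(\tau+\tau^2+1) = 0$. If $n=3$, then $\xi^\circ$ has three paths matching $\{u_1,u_2,u_3,z_1,z_2,z_3\}$. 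In Case B, where some pair $\{z_i,z_{i'}\}$ is joined within the $z$'s by $\xi^\circ$, the remaining $z$ is paired with some $u_a$ and tracing forces $j(k,\xi^\circ)=a$ constant in $k$ (in one of the three $\xi^{(k)}$, the $z_i$-$z_{i'}$ path in $\xi^\circ$ closes into a loop through $v$), so $S=0$; in Case A, where each $z_j$ is paired with some $u_{\pi(j)}$ for a permutation $\pi$, tracing gives $j(k,\xi^\circ)=\pi(k)$.

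In Case A the three paths $z_j \leftrightsquigarrow u_{\pi(j)}$ are disjoint curves from points clustered around $v$ out to the boundary, so excising a small disk around $v$ containing $z_1,z_2,z_3$ leaves three non-crossing matching arcs in an annulus. The main step, and the only real obstacle, is the elementary topological fact that non-crossing matchings of three inner to three outer boundary points of an annulus are exactly the three cyclic rotations $\pi\colon k \mapsto k+c \pmod 3$; granted this, $S(\xi^\circ) = \sum_k \tau^{2k+c} = \tau^c(\tau^2+\tau+1) = 0$. The tracing arguments in the remaining cases are direct after one identifies which half-edges at $v$ and at each $z_j$ are ``pass-throughs'' and which are path endpoints in $\xi^{(k)}$.
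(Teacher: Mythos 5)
Your proof is correct and is essentially the paper's own argument: the paper groups the configurations of $\bigcup_{k} W_\Omega(u_1,u_2,u_3,z_k)$ into triples whose members differ only in the half-edges adjacent to $v$ (your three extensions $\xi^{(1)},\xi^{(2)},\xi^{(3)}$ of a common admissible $\xi^\circ$) and checks that each triple contributes zero, delegating the case analysis to the graphical proof in Figure~\ref{fig:DiscreteHolomorphicity}. Your explicit parity bookkeeping at $v$ and the $z_j$, together with the cyclic-rotation fact for disjoint arcs in an annulus in Case~A, is precisely a written-out version of what that figure encodes.
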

\begin{proof}

We group  loop configurations from $\cup_{z\in \{z_1, z_2, z_3\}} W(u_1, u_2, u_3, z)$ in triples such that 
any two loop configurations in the same triple differ by two half-edges adjacent to $v$ (See Fig. \ref{fig:DiscreteHolomorphicity}). 
Each triple contributes zero to 
$$\sum\limits_{k=1}^3 τ^k \sum_{ξ \in W_Ω(u_1, u_2, u_3, z_k)} H(ξ) .$$
\begin{figure}[ht!]
    \centering
    \includegraphics[width=0.75\textwidth]{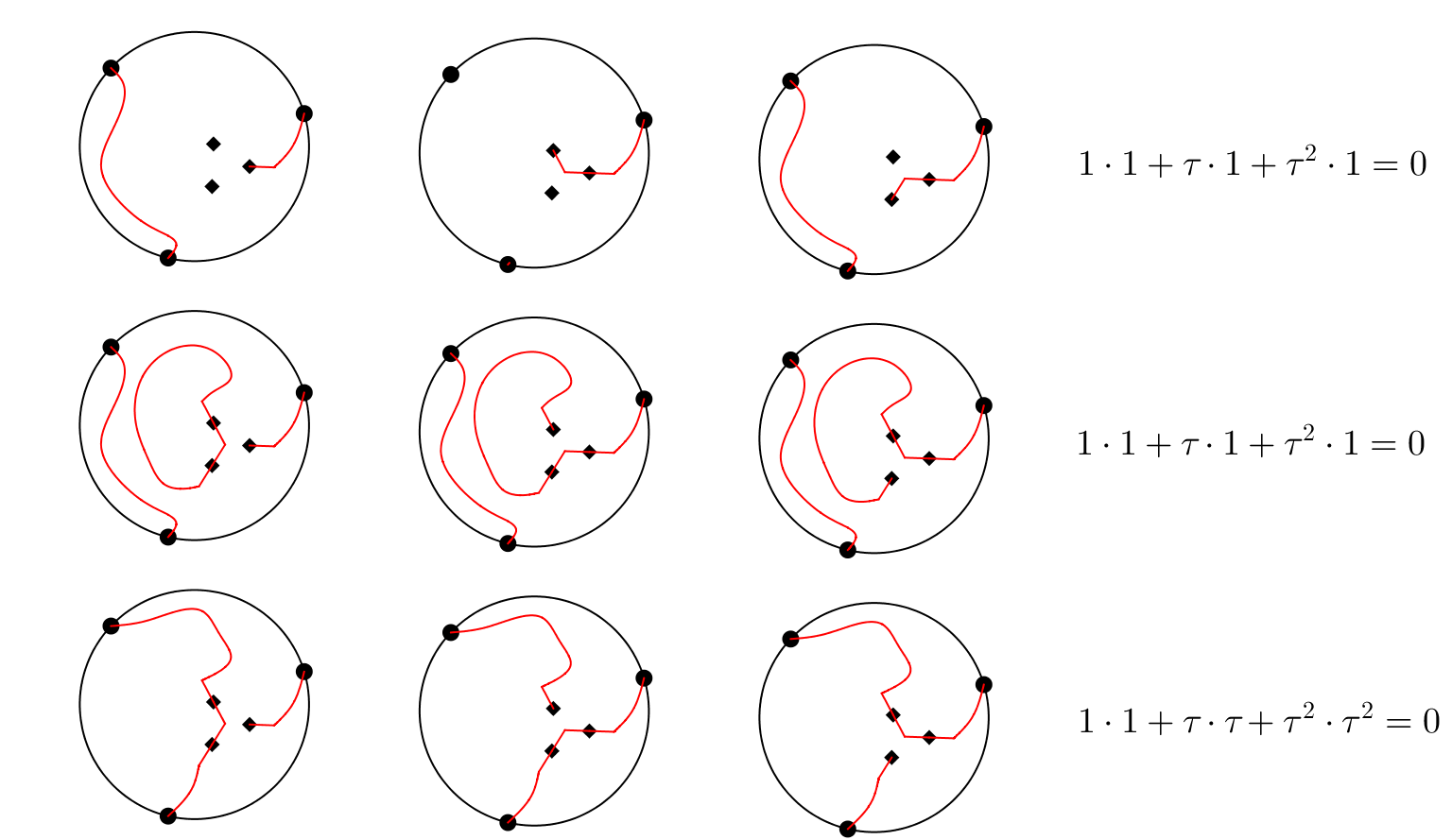}
    \caption{Graphical proof of Lemma \ref{lemma:DiscreteHolomorphicity}.
    Mid-edges  $z_1, z_2, z_3$ are marked with diamonds and  $u_1, u_2, u_3$ with circles.
     Configurations are grouped horizontally.}
    \label{fig:DiscreteHolomorphicity}
\end{figure}
\end{proof}

\begin{corollary}
Let $ γ$ be a dual contour, i.e. a sequence $(w_0, w_1, \dots w_n = w_0)$ 
of distinct faces where any two consecutive faces $w_j$ and $w_{j+1}$ share exactly one edge $e_j$. 
Then the discrete integral of $F$  along $γ$ defined by the formula
$$ \int\limits_{γ}^\# F(z)  \, d^{\#}z  := \sum\limits_{j=0}^{n-1} F(e_j) (w_{j+1}^\circ - w_{j}^\circ) $$
(here $w_{j}^\circ$ stands for the center of $w_j$) vanishes.
\end{corollary}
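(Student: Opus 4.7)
I would prove a discrete Stokes' theorem for $F$: reduce $\int_{\gamma}^\# F\, d^\# z$ to a sum of elementary integrals around each vertex of $\Omega$ enclosed by $\gamma$, and show that each such elementary integral vanishes as an immediate consequence of Lemma~\ref{lemma:DiscreteHolomorphicity}.

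\emph{Local step.} First I would fix an interior vertex $v$ of $\Omega$ and let $H_1, H_2, H_3$ be the three hexagonal faces meeting at $v$, listed counterclockwise. Letting $z_k$ be the mid-edge shared by $H_k$ and $H_{k+1}$ (indices mod $3$), the mid-edges $z_1, z_2, z_3$ are also in counterclockwise order around $v$, as required by Lemma~\ref{lemma:DiscreteHolomorphicity}. The centers $H_k^\circ$ form a small equilateral triangle around $v$, so
\[ H_{k+1}^\circ - H_k^\circ \;=\; \tau\,(H_k^\circ - H_{k-1}^\circ) \;=\; \beta\,\tau^{k-1} \]
for a common nonzero constant $\beta$ that depends only on the mesh. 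The discrete integral along the counterclockwise triangular loop $(H_1, H_2, H_3, H_1)$ is then
\[ \sum_{k=1}^3 F(z_k)\bigl(H_{k+1}^\circ - H_k^\circ\bigr) \;=\; \beta \sum_{k=1}^3 \tau^{k-1} F(z_k) \;=\; \beta\tau^{-1} \sum_{k=1}^3 \tau^k F(z_k) \;=\; 0 \]
by Lemma~\ref{lemma:DiscreteHolomorphicity}.

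\emph{Global step.} Next I would invoke simple connectedness of $\Omega$. The dual graph, whose vertices are the hexagonal faces of $\Omega$ and whose edges join face-adjacent pairs, is planar and simply connected, and its faces are precisely the elementary triangles from the local step, one per vertex of $\Omega$. The simple dual cycle $\gamma$ therefore bounds a finite $2$-chain $D$ in this complex which, with counterclockwise orientation, decomposes as a sum of such elementary triangles. Writing $\int_\gamma^\# F$ as the integral along $\partial D$ and cancelling interior edges gives
\[ \int_{\gamma}^\# F(z)\, d^\# z \;=\; \sum_{v \in D} \bigl(\text{elementary integral around } v\bigr) \;=\; 0. \]

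\emph{Main obstacle.} The local step carries all the content and is a one-line consequence of the discrete holomorphicity identity. The real bookkeeping is in the global step: I would need to verify that the orientations of the elementary triangles inherited from $\partial D$ match the counterclockwise convention of Lemma~\ref{lemma:DiscreteHolomorphicity}, and to tacitly assume that $\gamma$ does not cross any edge whose mid-edge is $u_1, u_2$, or $u_3$, since $F$ is undefined there. Beyond this, no estimate is required, because discrete holomorphicity is an exact identity rather than an approximate one.
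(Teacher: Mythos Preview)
Your proposal is correct and follows exactly the paper's approach: the paper's proof simply states that the identity holds for an elementary contour (three faces around a vertex) by \eqref{eq:DiscreteHolomorphicity}, and that any dual contour decomposes additively into elementary ones. You have supplied the explicit computation for the local step and the chain-level justification for the global one, which the paper leaves implicit; your concern about avoiding $u_1,u_2,u_3$ is in fact moot, since these are boundary mid-edges and every edge $e_j$ of a dual contour is shared by two faces of $\Omega$, hence interior.
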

\begin{proof}
For an  elementary contour (i.e. that consists of three faces adjacent to the same vertex) the equality follows from
(\ref{eq:DiscreteHolomorphicity}).
Since any contour can be decomposed into a union of  elementary ones and the discrete integration is additive w.r.t contour,
the corollary is also true for arbitrary contour.
\end{proof}

\begin{rem}
The functions $H_1, H_2, H_3$ can also be defined on the vertices, though an interface can now arrive from three possible directions. Apparently, that would give exactly the same functions $H_1,
H_2, H_3$ as were defined in \cite{Smirnov01criticalpercolation} and $F$ as was defined in \cite{Beffara07EasyWay} under name $h$.
The Aizenman-Duplantier-Aharony recoloring \cite{AizenmanDuplantierAharony99} used  in \cite{Smirnov01criticalpercolation}
corresponds to the last triple in the Figure \ref{fig:DiscreteHolomorphicity}. 
\end{rem}

In terms of observable $F$  Lemma \ref{lemma:percloop}  says that if mid-edges $u_1, u_2, u_3$ lie on the boundary of $\Omega$
and a mid-edge $z$ lies on the boundary arc $∂_{j}\Omega$ then 
\begin{equation}
\label{eq:BV}
F(z) =  \Pperc_Ω [∂_{j+1,z}Ω ↔  ∂_{j-1, j}Ω] \cdot τ^{j-1} + 
        \Pperc_Ω [∂_{j, j+1}Ω ↔  ∂_{z, j-1}Ω] \cdot τ^{j+1} 
         \subset [τ^{j-1} , τ^{j+1}].
\end{equation}

\section{Theorem \ref{Thm1}  for the Jordan case } 
We denote by $\T$ the open domain bounded by the regular triangle with vertices $1, τ, τ^2$. 
For a simply-connected domain  $U$ with three chosen prime ends $A, B, C$ we denote by $φ_{U; A, B,C}$ the conformal map from
$U$ to $\T$ that maps $A$, $B$, $C$ to  $τ, τ^2, τ^3= 1$ respectively.

\begin{definition} \label{definition:NiceApproximation} Let $\LIM  {Ω} \subset \mathbb C$ be a bounded simply-connected domain
and $\LIM A$, $\LIM B$, $\LIM C$, $\LIM D$ be prime ends of $\LIM {Ω}$  lying in the counterclockwise order. Let $\{(Ω^δ, A^δ,
B^δ, C^δ, D^δ) \}_δ$ parametrized by $δ \searrow 0$ be a sequence such that $Ω^δ$ is a $\hexagon^δ$-domain, $A^δ, B^δ, C^δ,
D^δ$ are boundary mid-edges of $Ω^δ$. We say that  the sequence $(Ω^δ, A^δ, B^δ, C^δ, D^δ)$ {\it approaches} $(\LIM  {Ω}, \LIM
A, \LIM B, \LIM C, \LIM D)$ if Assumption \ref{assumption:JordanApproximation} or Assumption 
\ref{assumption:CaratheodoryApproximation} holds, see below.
\end{definition}

\begin{assumption} \label{assumption:JordanApproximation}
$∂ \LIM  {Ω}$ is a Jordan curve,  $Ω^δ$ is the $\hexagon^δ$-domain lying inside $\LIM  {Ω}$ of the maximal area and $A^δ, B^δ,
C^δ, D^δ$ are the boundary mid-edges of  $Ω^δ$ closest to $\LIM A$, $\LIM B$, $\LIM C$, $\LIM D$ respectively. 
\end{assumption}

\begin{proof}[Proof of Theorem \ref{Thm1}  under Assumption \ref{assumption:JordanApproximation}]
Let $F_δ$ be defined by  the formula (\ref{eq:defF}) for $(Ω, u_1, u_2, u_3) = (Ω^δ, A^δ, B^δ, C^δ)$. We denote by $f_δ$  the piecewise
linear extension of $F_δ$ defined as follows. First, define $f_δ$ on centers, mid-edges and vertices of all the hexagons
intersecting $Ω$ by $f_δ(u) := F_δ(u^δ)$, where $u^δ$ the mid-edge of $\MidEdgesExcept(Ω^δ)$ closest to $u$ (if there are
several closest mid-edges we choose one arbitrary). Then extend $f_δ$ linearly to each triangle spanned by adjacent vertex,
mid-edge and center of a face.

Lemma  \ref{lemma:DiscreteEstimates} implies that the family $\{f_δ\}_δ$ is uniformly Hölder on any  $K \Subset  \LIM {Ω}$.
Moreover, since $\LIM {Ω}$ is Jordan, it is locally connected: there exists $\zeta(\cdot) = o(1)$ near $0$ such that any two
points $x,y \in \LIM {Ω}$ can be joined inside $\LIM {Ω}$ by a curve of a diameter at most $\zeta(|x-y|)$. 
So from Lemma \ref{lemma:DiscreteEstimates} we can derive that the family $\{f_δ\}_δ$ is equicontinuous on $\overline{ \LIM
{Ω}}$.

By Arzel\`a--Ascoli theorem there is a continuous function $f \colon  \overline{ \LIM {Ω}} \to \mathbb C$ and a sequence 
$\{δ_n\}_n$ converging to 0 such that $f_{δ_n} \rightrightarrows f$ on $\overline{ \LIM {Ω}}$. Let  $γ \Subset  \LIM
{Ω}$ be any rectangular contour and  let $γ_{δ_n}$ be a dual contour of the maximal area lying inside $γ$. Then
$$\int\limits_γ f(z) \, dz
= \lim\limits_{n\to\infty} \int\limits_{γ_{δ_n}} f(z) \, dz  
= \lim\limits_{n\to\infty}
\int\limits_{γ_{δ_n}}^\# F_{δ_n}(z) \, d^{\#}z 
=0,$$
so $f$ is holomorphic by Morera's theorem. 

 From (\ref{eq:BV}) we conclude that $f$ maps $∂_{j} \LIM {Ω} $ to $∂_{j} \T =  [τ^{j+1}, τ^{j-1}]$. The argument
 principle implies that $f =  φ_{\LIM \Omega, \LIM A, \LIM B, \LIM C} =: φ $, so all subsequential limits of $\{f_δ\}$
 coincide. Again using (\ref{eq:BV}) we find that
\begin{equation*}
\lim\limits_{δ\searrow 0}\Pperc_{Ω^δ}[
∂_{A^δ B^δ} Ω^δ ↔  ∂_{C^δ D^δ} Ω^δ]
=
\lim\limits_{δ\searrow 0}
 \frac {φ(\LIM C)- F_δ(D^δ)}
{φ(\LIM C) - φ(\LIM A)}
= \frac {φ(\LIM C)- φ(\LIM D)}
{φ(\LIM C) - φ(\LIM A)}.
\end{equation*}
\end{proof}

\section{A priori estimates}
Our work requires only one non-trivial result on percolation: the famous Russo-Seymour-Welsh estimate.
We state it in the following way:

\begin{proposition}[RSW estimate] 
\label{prop:RSW'}
There exist $\eta > 0$ and $C_{\mathrm{RSW}}>0$ such that for any $r<R$ and for any $δ$
 $$\mathbb \Pperc_{\mathbb C_{\hexagon}^{δ}} 
 [∂ B_r ↔  ∂ B_R]  < C_{\mathrm{RSW}} (r/R) ^ \eta.$$
\end{proposition}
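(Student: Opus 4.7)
The plan is to derive polynomial decay of the one-arm probability from the classical RSW crossing estimate for rectangles via self-duality at $p_c = 1/2$ and a dyadic multi-scale iteration.

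First I would invoke the self-duality of face percolation on the hexagonal lattice at $p = 1/2$: a left-right crossing of any square happens with probability exactly $1/2$. The standard RSW theorem upgrades this to: for every aspect ratio $\rho$ there exists $c_\rho > 0$ such that a blue crossing between the short sides of a $\rho R \times R$ rectangle drawn in $\mathbb C_{\hexagon}^{δ}$ exists with probability at least $c_\rho$, uniformly in $R \gg δ$ and in $δ$. Gluing four overlapping long rectangles along an annulus via the FKG inequality then yields a constant $c \in (0,1)$ such that
$$\Pperc_{\mathbb C_{\hexagon}^{δ}}[\text{there is a yellow circuit surrounding } B_r \text{ inside } B_{2r} \setminus B_r] \geq c,$$
uniformly in $r \gtrsim δ$.

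Next I would iterate over dyadic scales: set $r_k := 2^k r$ for $k = 0, 1, \dots, K-1$ with $K := \lfloor \log_2(R/r) \rfloor$. If $\partial B_r \leftrightarrow \partial B_R$, then none of the disjoint annuli $B_{r_{k+1}} \setminus B_{r_k}$ may contain a yellow circuit surrounding the inner disk. Because these annuli involve disjoint hexagonal faces, the corresponding circuit events are independent, each occurring with probability at least $c$. This gives
$$\Pperc_{\mathbb C_{\hexagon}^{δ}}[\partial B_r \leftrightarrow \partial B_R] \leq (1-c)^K \leq C_{\mathrm{RSW}} (r/R)^{\eta}$$
with $\eta := -\log_2(1-c) > 0$ and a suitable $C_{\mathrm{RSW}}$. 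The degenerate regimes $R/r = O(1)$ and $r \lesssim δ$ are absorbed into the constant $C_{\mathrm{RSW}}$ (or handled by the trivial bound by $1$).

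The hard part is genuinely the RSW input itself, which is the only nontrivial probabilistic ingredient; in the setting of this paper it is treated as a classical black box. Once RSW is in hand, the FKG gluing to circuits and the dyadic independence are routine, and the exact scale invariance of $\Pperc_{\mathbb C_{\hexagon}^{δ}}$ under rescaling by $δ$ delivers uniformity in $δ$ automatically.
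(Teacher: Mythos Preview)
Your sketch is correct and is exactly the standard textbook derivation of the one-arm polynomial bound from the RSW rectangle-crossing estimate: RSW plus FKG gives a uniform lower bound on the probability of a dual (yellow) circuit in each dyadic annulus, and independence across disjoint annuli yields the power-law decay with $\eta = -\log_2(1-c)$.

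There is nothing to compare against, however: the paper does not prove this proposition. It is introduced with the sentence ``Our work requires only one non-trivial result on percolation: the famous Russo--Seymour--Welsh estimate'' and is then used as a black box, with implicit reference to the classical sources \cite{Russo78, SeymourWelsh78, Kesten82}. You yourself note this in your last paragraph. So your proposal is not an alternative to the paper's proof but rather a (correct) outline of the classical argument that the paper takes for granted.
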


In order to make the proofs work for domains with possibly complicated boundaries, we define a metric on the closure $\overline
U$ of a Jordan domain $U$ by formula
$$ρ_U(x, y) := \inf\{\diam γ: γ \subset \overline { U} \text{ is a curve from } x \text{ to } y  \}$$
and formulate Lemma \ref{lemma:DiscreteEstimates} in terms of this metric.
To prove Theorem \ref{Thm1} for the case when $\LIM \Omega$ is smooth one can use the Euclidian metric instead of it.

\begin{lemma}[Hölder continuity]
\label{lemma:DiscreteEstimates}
There exist $\eta, C>0$ such that the following holds.
Let  $Ω$ be a $\hexagon^{δ}$-domain with three marked boundary mid-edges $v_1, v_2, v_3$. 
Assume that a set $S$ is such that $\overline \Omega \setminus S$ has a path-connected component, containing two mid-edges 
$x,y \in \MidEdgesExcept(\Omega)$ and at most one marked mid-edge, then 
\begin{equation}
\label{eq:DiscreteEstimates}
\forall j\ \  \ |H_j(x) - H_j(y)| <
 C \left( \frac{\diam S}{R} \right)^\eta,
\end{equation}
where $R = \max_k ρ_{\Omega}(S, ∂_k \Omega )$.
  \end{lemma}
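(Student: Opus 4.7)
My plan for the proof is to leverage the spinor-percolation reformulation of Section \ref{Section:Spinor} together with the RSW estimate of Proposition \ref{prop:RSW'}. First, recall that $H_j(z) = \Ploop_{\Omega, \{v_1, v_2, v_3, z\}}[\CP{z}{v_j}]$ is a probability in the uniform loop-configuration (equivalently, spinor-percolation) model: it equals the probability that, in the random spinor coloring on $\tilde\Omega_{v_1, v_2, v_3, z}$, the interface emerging from the disorder at $z$ reaches $v_j$.

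Since $x$ and $y$ lie in a common path-connected component $U$ of $\overline{\Omega} \setminus S$, one may fix a path $P \subset \HalfEdges(\Omega)$ joining $x$ to $y$ inside $U$. Taking the symmetric difference with $P$ then gives a measure-preserving bijection $\Phi\colon W_\Omega(v_1, v_2, v_3, x) \to W_\Omega(v_1, v_2, v_3, y)$, $\Phi(\xi) := \xi \oplus P$, so that
\[ |H_j(x) - H_j(y)| = \bigl|\Ploop_{\Omega,\{v_1,v_2,v_3,x\}}\bigl[\mathbf{1}_{\CP{x}{v_j}}(\xi) - \mathbf{1}_{\CP{y}{v_j}}(\Phi(\xi))\bigr]\bigr|. \]
The heart of the proof is to isolate a good event $G$ on which the two indicators agree, so that the above is bounded by $\Ploop[\overline G]$.

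For $G$ I would take the existence of nested monochromatic circuits in the spinor coloring that together surround $S$ (hence the path $P$) and lie inside $\Omega$ at intrinsic distance $\ge R/2$ from those boundary arcs of $\partial\Omega$ that are not contained in $U$; such arcs exist because $U$ contains at most one marked mid-edge, leaving at least two $v_k$'s on the far side of $S$. On $G$, the XOR with $P$ only rearranges the portion of the interface structure of $\xi$ inside the innermost circuit, leaving intact the pairing of the $v_k$'s outside this circuit and forcing $\mathbf{1}_{\CP{x}{v_j}}(\xi) = \mathbf{1}_{\CP{y}{v_j}}(\Phi(\xi))$. Iterating Proposition \ref{prop:RSW'} over $\sim \log_2(R/\diam S)$ disjoint dyadic annuli (in the $\rho_\Omega$-metric) around $S$, each of bounded ratio, then yields $\Ploop[\overline G] \le C (\diam S / R)^\eta$, completing the estimate.

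The main obstacle I anticipate is a precise formulation of the good event $G$ together with the topological claim that the pairing of the $v_k$'s outside $U$ is preserved under XOR with $P$. In particular, since any loop in a loop configuration must encircle an even number of disorders, one cannot naively demand a single loop around $\{x, y\}$; the argument must either be phrased directly in terms of monochromatic circuits in the spinor picture, or allow the loop to encircle $\{x, y\}$ together with the (at most one) marked mid-edge $v_{j_0}$ in $U$, which introduces a case distinction. The use of the intrinsic metric $\rho_\Omega$ rather than the Euclidean one is also essential here in order to accommodate Jordan boundaries with possibly irregular geometry.
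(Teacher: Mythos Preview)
Your coupling via $\xi\mapsto\xi\oplus P$ is the right starting point and coincides with the paper's first step. The gap is in the choice of the good event $G$. In the loop model with disorders at $\{v_1,v_2,v_3,x\}$, every closed loop of $\xi$---and equivalently every monochromatic circuit in the associated spinor coloring---must encircle an \emph{even} number of disorders. In the generic situation where $U$ contains \emph{no} marked mid-edge (which is allowed by the hypothesis ``at most one''), any circuit surrounding $P$ and staying inside $U$ encircles exactly the single disorder $x$. Hence no such monochromatic circuit exists, your event $G$ has probability zero, and the bound $\Ploop[\overline G]\le C(\diam S/R)^\eta$ is vacuous. You flag this parity issue as an ``obstacle'', but the proposed remedies do not cover the case $v_{j_0}\notin U$; and even when $v_{j_0}\in U$, forcing the circuit to also encircle $v_{j_0}$ can make its diameter comparable to $R$, destroying the estimate.

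The paper avoids circuits altogether. It first observes that if the link pattern changes under $\xi\mapsto\xi\oplus[xy]$, then $[xy]$ must be connected by edges of $\xi$ to \emph{all three} $v_j$; in particular a small neighborhood $\tilde S$ of $S$ is connected to both ``far'' marked points $v_1,v_2$. Writing $\beta(\xi)\subset\xi$ for the minimal such pair of connections and choosing a path $\alpha(\beta(\xi))$ from $x$ to $v_3$ disjoint from $\beta(\xi)$ (depending only on $\beta(\xi)$), the map $\xi\mapsto\xi\oplus\alpha(\beta(\xi))\oplus\partial_3\Omega$ is \emph{injective} on bad configurations and lands in $W_\Omega(\emptyset)$. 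Its image always contains a loop touching both $\tilde S$ and $\partial_3\Omega$, hence a monochromatic path of diameter $\ge R/2$ in ordinary (disorder-free) percolation, and a single application of the one-arm RSW bound of Proposition~\ref{prop:RSW'} finishes the proof. The key idea you are missing is this injection into the disorder-free model; it is what replaces the unavailable circuit argument.
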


\begin{proof} See Figure \ref{fig:proofofholder}.
We start by assuming that $R /100 > 100 \diam S > \delta$, otherwise Lemma follows by chosing large enough $C$.
Without loss of generality $R = ρ_{\Omega}(S, ∂_3 \Omega)$, so $v_1$, $v_2$ are outside of the path connected component of 
$\overline \Omega \setminus S$  that contains  $x, y$.
Let $\tilde S$ be the $(10\delta)$-neighborhood of $S$  with respect to $ρ_\Omega$.
We choose a $\hexagon$-path $[xy]$ such that no path joining $[xy]$ and $\{v_1, v_2\}$ is disjoint from $\tilde S$.
Clearly, the LHS of (\ref{eq:DiscreteEstimates}) is bounded by $\Ploop_{Ω, v_1, v_2, v_3, x} [ H(ξ)\ne  H(ξ \oplus [xy])]$,
and let us call configurations $ξ$ such that the last event occurs {\it bad} and denote the set of bad configurations by
$W^{\mathrm{bad}}$.

If $ξ \in W^{\mathrm{bad}}$, then $[xy]$ should be connected to each of $v_1, v_2, v_3$ by edges of $ξ$; so $\tilde S$ is
connected to $v_1$ and $v_2$ by edges of $ξ$. 
Let $β(ξ)$ be the minimal subset of $ξ$ that connects $\tilde S$ to $v_1$ and $v_2$ (this is a union of two paths). 
Now note that there is a path in $\HalfEdges(\Omega) \setminus β(ξ)$ from $x$ to $u_3$. For each $ξ \in W^{\mathrm{bad}}$
we choose such path $α(β(ξ))$ in any way depending on $β(ξ)$ but not on $ξ$ itself.

\begin{figure}[ht!]
\begin{center}
\includegraphics[width = 0.8\textwidth]{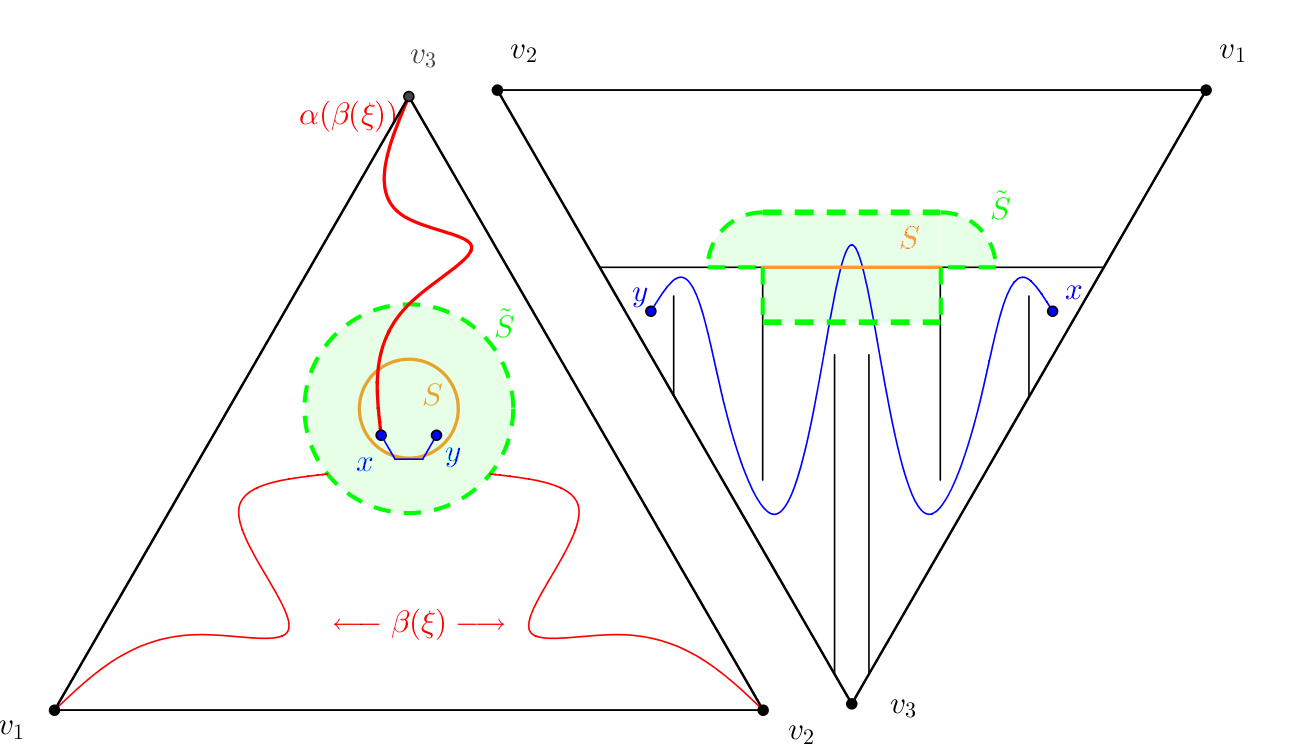}
\caption{Two examples of a $\hexagon$-path $[xy]$ (in blue), sets $S$ (in orange) and $\tilde S$ (in green)
and  $\hexagon$-paths $α(β(ξ)), β(ξ)$ (in red, only on the left).
 In general, $S$ either surrounds $x$ and $y$ as on the left, 
or cuts away the part of $\Omega$ containing them, as on the right. On the right we sketch how such cuts would look if $\LIM \Omega$ is non-Jordan.}
\label{fig:proofofholder}
\end{center}
\end{figure}

Note that the map $ξ \mapsto ξ \oplus α(β(ξ))$ is injective on $W^{\mathrm{bad}}$.
Moreover, if $ξ \in W^{\mathrm{bad}}$, then $ξ \oplus α(β(ξ)) \oplus  ∂_3 \Omega$ is a loop configuration without
disorders that contains a loop touching $\tilde S$ and $∂_3 \Omega$.
Then the corresponding coloring defined as in the proof of Lemma \ref{lemma:percloop} (assuming that the outer boundary of
$\Omega$ is yellow) contains a  monochromatic path between $\tilde S$ and $∂_3 \Omega$.
Since the diameter of any such path is at least $R/2$, we conclude by the RSW estimate.
\end{proof}

\section{Theorem \ref{Thm1} for the general case } 
In this section we work under the following assumption, which is more general than Assumption \ref{assumption:JordanApproximation}.

\begin{assumption}[convergence in the Carathéodory sense]  \label{assumption:CaratheodoryApproximation}
$\LIM {Ω}$ is an arbitrary bounded simply-connected domain and the following properties hold:  
\begin{itemize}
 \item Any $K$ such that $K \Subset {\LIM {Ω}} $ is contained in $Ω^δ$ for $δ$ small enough;
 \item $φ_{Ω^δ; A^δ, B^δ,C^δ}^{-1} =: φ_{δ}^{-1}$ converges to  $φ_{\LIM  {Ω}; \LIM A, \LIM B, \LIM C}^{-1} =: φ^{-1} $  uniformly on any compact $K\Subset \T$;
 \item $φ_{Ω^δ; A^δ, B^δ,C^δ} (D^δ) $ converges to  $φ_{\LIM {Ω} ; \LIM A, \LIM B, \LIM C}(\LIM D)$;
 \item $\cup_δ Ω^δ$ is bounded.
\end{itemize} 
\end{assumption}

\begin{proof}[Proof of Theorem \ref{Thm1}  for the general case]
As in the proof for the Jordan case, using Lemma \ref{lemma:DiscreteEstimates}  we define
functions $f_δ$ and find a sequence  $\{δ_n\}_n$ converging to 0 and a holomorphic
function  $f$ on $\LIM {Ω}$ such that
\begin{equation} 
f_{δ_n} \rightrightarrows f \text{ on any } K \Subset  \LIM {Ω}.
\end{equation}

To analyze its boundary behavior in the prime end (`Carathéodory') compactification, we extend $ φ_{δ_n}^{-1}$ to
$\overline{\T}$ by continuity and note that the sequence $f_{δ_n} \circ φ_{δ_n}^{-1} \colon \overline{\T} \to \mathbb C$
uniformly converges to $f \circ φ^{-1}$ on any compact subset of $\T$. 
Then we aim to show that this sequence is equicontinuous on $\overline{\T}$.

For $x, y \in \overline {\T}$ and any $δ$ we consider the set of simple (possibly closed) curves $γ \subset \overline{Ω^{δ}}$ 
such that $\overline{Ω^{δ}} \setminus γ$ consist of exactly two path-connected components, one containing $φ_{δ}^{-1}(x),
φ_{δ}^{-1}(y)$ and another one containing at least two of marked points $A^{δ}, B^{δ}, C^{δ}$ and denote by $\tilde {ρ}_δ(x,
y)$ the infimum of lengths of those curves.
By estimating the extremal length one can easily show that
\begin{equation} \label{eq:rhoestimate}
\lim_{\epsilon \searrow 0} \sup_δ \sup_{x, y \in \overline {\T}, |x-y|<\epsilon} \tilde  {ρ}_δ(x, y) = 0.
\end{equation}

Now we note that the family $\{Ω^δ\}_δ$ is non-degenerate in the following sense:
\begin{equation} \label{InRadiusBound}
\LIM r:=\liminf_{δ \searrow 0}	\inf_{x \in Ω^δ} \max_k ρ_{Ω^δ}(x, ∂_k Ω^δ) > 0.
\end{equation}

Indeed, assume the contrary, then for any $δ$ there exists a  path-connected  $Y_δ \subset \overline {Ω^δ}$ touching all three
boundary arcs of ${Ω^δ}$ such that $\liminf \diam Y_δ = 0$ as ${δ\to 0}$.
Let $O$ be the center of $\T$ and set $O_δ := φ_δ^{-1} (O)$.
Since $φ_δ^{-1}$ uniformly converges to $φ^{-1}$ on some open neighborhood of $O$, the distance from $O_δ $ to $∂ Ω^δ$ is
bounded from below, so  $\liminf \dist(O_δ ,  Y_δ ) > 0$. At the same time the harmonic measures with the pole at $O_δ $ of
arcs $∂_{A^δ, B^δ} Ω^δ$, $∂_{B^δ,C^δ} Ω^δ$, $∂_{C^δ, A^δ} Ω^δ$ equal to $1/3$.
Since one of those arcs is separated from $O_δ $ by $Y_δ$, (\ref{InRadiusBound}) is proven by contradiction.

Now for $x, y \in \overline {\T}$ at a small distance, for any $δ$ we can disconnect $φ_{δ}^{-1}(x), φ_{δ}^{-1}(y)$ from at
least two marked points by a curve $γ$ of small diameter by (\ref{eq:rhoestimate}).
Then we plug $S = γ$ in (\ref{eq:DiscreteEstimates}) and estimate the denominator in the RHS of (\ref{eq:DiscreteEstimates}) by
the triangle inequality $\max_k ρ_{Ω^δ}(γ, ∂_k Ω^δ) \geq\inf_{x \in Ω^δ} \max_k ρ_{Ω^δ}(x, ∂_k Ω^δ) - \diam γ$ and 
(\ref{InRadiusBound}). 
From that we conclude  the sequence $f_{δ_n} \circ φ_{δ_n}^{-1} $ is equicontinuous on $\overline{\T}$. 
As in the proof for the Jordan case, it follows from (\ref{eq:BV}) that  $f\circ φ^{-1}$ maps $∂_{j} \T =  [τ^{j+1}, τ^{j-1}]$
to itself, so by the argument principle
$$ f_δ \circ φ_δ^{-1}\rightrightarrows f\circ φ^{-1} = id \text{ on }  \overline \T$$
which in turn implies that
\begin{equation*}
 \lim\limits_{δ\searrow 0}   \Pperc_{Ω^δ}  [∂_{A^δ B^δ} Ω^δ ↔ ∂_{C^δ D^δ} Ω^δ]
=\lim\limits_{δ\searrow 0} 
\frac 
	{φ(\LIM C)- (f_δ \circ φ_δ^{-1})\circ φ_δ(D^δ)}
	{φ(\LIM C) - φ(\LIM A)}
= 
\frac 
	{φ(\LIM C)- φ(\LIM D)}
	{φ(\LIM C) - φ(\LIM A)}.
\end{equation*}

\end{proof}

\phantom
{\cite{Beffara07EasyWay ,beffara2013critical, Jiang17, smirnov2009critical, Kesten82, Grimmett99, Russo78, SeymourWelsh78, Pommerenke92, 
KhristoforovPHD}}

\bibliography{bb}{}

\begin{thebibliography}{10}

\bibitem{AizenmanDuplantierAharony99}
M.~Aizenman, B.~Duplantier, and A.~Aharony.
\newblock Path-crossing exponents and the external perimeter in 2d percolation.
\newblock {\em Phys. Rev. Lett.}, 83:1359--1362, Aug 1999.

\bibitem{Beffara07EasyWay}
V.~Beffara.
\newblock Cardy's formula on the triangular lattice, the easy way.
\newblock In {\em Universality and renormalization}, volume~50 of {\em Fields
  Inst. Commun.}, pages 39--45. Amer. Math. Soc., Providence, RI, 2007.

\bibitem{beffara2013critical}
V.~Beffara.
\newblock Critical percolation on mesoscopic triangulations, 2013.

\bibitem{BroadbentHammersley57}
S.~R. Broadbent and J.~M. Hammersley.
\newblock Percolation processes. {I}. {C}rystals and mazes.
\newblock {\em Proc. Cambridge Philos. Soc.}, 53:629--641, 1957.

\bibitem{Cardy92}
J.~L. Cardy.
\newblock Critical percolation in finite geometries.
\newblock {\em J. Phys. A}, 25(4):L201--L206, 1992.

\bibitem{CD-CHKS}
D.~Chelkak, H.~Duminil-Copin, C.~Hongler, A.~Kemppainen, and S.~Smirnov.
\newblock Convergence of {Ising} interfaces to {Schramm}'s {SLE} curves.
\newblock {\em Comptes Rendus Mathematique}, 352(2):157 -- 161, 2014.

\bibitem{ChelkakSmirnov12}
D.~Chelkak and S.~Smirnov.
\newblock Universality in the 2{D} {I}sing model and conformal invariance of
  fermionic observables.
\newblock {\em Invent. Math.}, 189(3):515--580, 2012.

\bibitem{Hugo13Parafermionic}
H.~Duminil-Copin.
\newblock {\em Parafermionic observables and their applications to planar
  statistical physics models}, volume~25 of {\em Ensaios Matem\'{a}ticos
  [Mathematical Surveys]}.
\newblock Sociedade Brasileira de Matem\'{a}tica, Rio de Janeiro, 2013.

\bibitem{FloresSimmonsKleban15}
S.~M. Flores, J.~J. Simmons, and P.~Kleban.
\newblock Multiple-sle connectivity weights for rectangles, hexagons, and
  octagons.
\newblock {\em arXiv preprint arXiv:1505.07756}, 2015.

\bibitem{Grimmett99}
G.~Grimmett.
\newblock {\em Percolation}, volume 321 of {\em Grundlehren der Mathematischen
  Wissenschaften [Fundamental Principles of Mathematical Sciences]}.
\newblock Springer-Verlag, Berlin, second edition, 1999.

\bibitem{HonglerSmirnov13}
C.~Hongler and S.~Smirnov.
\newblock The energy density in the planar {I}sing model.
\newblock {\em Acta Math.}, 211(2):191--225, 2013.

\bibitem{Jiang17}
J.~Jiang.
\newblock Exploration processes and {${\rm SLE}_6$}.
\newblock {\em Markov Process. Related Fields}, 23(3):445--465, 2017.

\bibitem{Kesten82}
H.~Kesten.
\newblock {\em Percolation theory for mathematicians}, volume~2 of {\em
  Progress in Probability and Statistics}.
\newblock Birkh\"{a}user, Boston, Mass., 1982.

\bibitem{KhristoforovPHD}
M.~Khristoforov.
\newblock {\em Low dimensional defects in percolation model}.
\newblock PhD thesis, 2018.
\newblock ID: unige:111513.

\bibitem{tmpKK}
M.~Khristoforov and V.~Kleptsyn.
\newblock {Smirnov}’s observable and link pattern probabilities in
  percolation.
\newblock {\em In preparation}.

\bibitem{tmpKSS}
M.~Khristoforov, M.~Skopenkov, and S.~Smirnov.
\newblock A generalization of {Cardy}’s and {Schramm}’s formulae.
\newblock {\em In preparation}.

\bibitem{LanglandsEtAl94}
R.~Langlands, P.~Pouliot, and Y.~Saint-Aubin.
\newblock Conformal invariance in two-dimensional percolation.
\newblock {\em Bull. Amer. Math. Soc. (N.S.)}, 30(1):1--61, 1994.

\bibitem{LawlerSchrammWerner04}
G.~F. Lawler, O.~Schramm, and W.~Werner.
\newblock Conformal invariance of planar loop-erased random walks and uniform
  spanning trees.
\newblock {\em Ann. Probab.}, 32(1B):939--995, 2004.

\bibitem{Pommerenke92}
C.~Pommerenke.
\newblock {\em Boundary behaviour of conformal maps}, volume 299 of {\em
  Grundlehren der Mathematischen Wissenschaften [Fundamental Principles of
  Mathematical Sciences]}.
\newblock Springer-Verlag, Berlin, 1992.

\bibitem{Russo78}
L.~Russo.
\newblock A note on percolation.
\newblock {\em Z. Wahrscheinlichkeitstheorie und Verw. Gebiete}, 43(1):39--48,
  1978.

\bibitem{Schramm01PercolationFormula}
O.~Schramm.
\newblock A percolation formula.
\newblock {\em Electron. Comm. Probab.}, 6:115--120, 2001.

\bibitem{SeymourWelsh78}
P.~D. Seymour and D.~J.~A. Welsh.
\newblock Percolation probabilities on the square lattice.
\newblock {\em Ann. Discrete Math.}, 3:227--245, 1978.
\newblock Advances in graph theory (Cambridge Combinatorial Conf., Trinity
  College, Cambridge, 1977).

\bibitem{Smirnov01criticalpercolation}
S.~Smirnov.
\newblock Critical percolation in the plane: conformal invariance, {C}ardy's
  formula, scaling limits.
\newblock {\em C. R. Acad. Sci. Paris S\'{e}r. I Math.}, 333(3):239--244, 2001.

\bibitem{smirnov2009critical}
S.~Smirnov.
\newblock Critical percolation in the plane, 2009.

\bibitem{Smirnov10}
S.~Smirnov.
\newblock Conformal invariance in random cluster models. {I}. {H}olomorphic
  fermions in the {I}sing model.
\newblock {\em Ann. of Math. (2)}, 172(2):1435--1467, 2010.

\end{thebibliography}
\bibliographystyle{abbrv} 
\end{document}